\theoremstyle{definition} \newtheorem{theorem}{Theorem}[section]
\theoremstyle{definition} 
\theoremstyle{definition} \newtheorem{lemma}[theorem]{Lemma}
\theoremstyle{definition} 
\theoremstyle{definition} \newtheorem{corollary}[theorem]{Corollary}
\theoremstyle{remark} 
\theoremstyle{remark} 
\theoremstyle{remark} 
\newcommand{\Zp}{\mathbb{Z}/p}
\begin{document}

\title[Bounding size of homotopy groups of spheres]{Bounding size of homotopy groups of spheres}

\author{Guy Boyde}
\address{Mathematical Sciences, University of Southampton, Southampton SO17 1BJ, United Kingdom}
\email{gb7g14@soton.ac.uk}

\subjclass[2010]{Primary 55Q40}
\keywords{homotopy groups of spheres, EHP Sequence}

\begin{abstract} Let $p$ be prime. We prove that, for $n$ odd, the $p$-torsion part of $\pi_q(S^{n})$ has cardinality at most $p^{2^{\frac{1}{p-1}(q-n+3-2p)}}$, and hence has rank at most $2^{\frac{1}{p-1}(q-n+3-2p)}$. For $p=2$ these results also hold for $n$ even. The best bounds proven in the existing literature are $p^{2^{q-n+1}}$ and $2^{q-n+1}$ respectively, both due to Hans-Werner Henn. The main point of our result is therefore that the bound grows more slowly for larger primes. As a corollary of work of Henn, we obtain a similar result for the homotopy groups of a broader class of spaces.
\end{abstract}

\maketitle

\section{Introduction}

Our goal is to bound the size of $\pi_q(S^n)$. Serre \cite{Serre2} showed that these groups are finitely generated abelian, and that they contain a single $\mathbb{Z}$-summand when $q=n$, or when $n$ is even and $q=2n-1$, and are finite otherwise. We may therefore restrict our attention to the $p$-torsion summand for each prime $p$. When $p$ is odd it suffices to consider $n$ odd, because, again by work of Serre \cite{Serre1}, $$\pi_q( S^{2n}_{(p)}) \cong \pi_{q-1}(S^{2n-1}_{(p)}) \oplus \pi_q( S^{4n-1}_{(p)}),$$ where $X_{(p)}$ denotes the localisation of the space $X$ at $p$.

For us, the \emph{rank} of a finitely generated module will be the size of a minimal generating set. Selick \cite{Selick} proved that the rank of $\pi_q(S^n_{(p)})$, regarded as a $\mathbb{Z}_{(p)}$-module, is at most~$3^{q^2}$. B\"odigheimer and Henn \cite{BodigheimerHenn} prove that the base-$p$ logarithm of the cardinality of the $p$-torsion part of $\pi_q(S^n)$ is at most $3^{(q-\frac{n}{2})}$, for $n$ odd and all primes $p$. By the same method, they prove that the rank of $\pi_q(S^n_{(p)})$ satisfies the same bound. In \cite{Henn}, Henn further improved this bound to $2^{q-n+1}$. In \cite{Iriye}, Iriye states the bound $3^\frac{q-n}{2p-3}$, which is similar to our Theorem \ref{main}, but gives no proof.

We will use the same machinery as all three of those papers, namely the EHP sequences of James \cite{James} and Toda \cite{Toda}. In particular, the new ideas in this paper are primarily combinatorial. A sub-exponential bound is not known, and \cite{BodigheimerHenn} note that to produce such a bound, one would have to introduce additional information from topology.

Our main result is as follows. Denote by $s_p(n,q)$ the base-$p$ logarithm of the cardinality of the $p$-torsion part of $\pi_q(S^n)$.

\begin{theorem} \label{main} For all natural numbers $q$, and $n$ odd, $$s_p(n,q) \leq 2^{\frac{1}{p-1}(q-n+3-2p)}.$$ If $p=2$ then this bound holds also for $n$ even.
\end{theorem}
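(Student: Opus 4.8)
The plan is to set up the EHP machinery and then do an induction on $n$ (and implicitly on $q$) using the sequences to reduce the homotopy of $S^n$ to that of spheres of lower dimension, carefully tracking how the bound $2^{\frac{1}{p-1}(q-n+3-2p)}$ behaves under these reductions. Concretely, for $p$ odd and $n=2m+1$ odd, Toda's EHP sequence gives a $p$-local fibration-like sequence relating $S^{2m-1}$, $S^{2m+1}$, and $S^{2pm+1}$ (the James–Toda $p$-primary EHP sequence), yielding long exact sequences of homotopy groups. The effect on cardinalities is that $s_p(2m+1, q)$ is bounded by a sum of $s_p$-values at $(2m-1, \cdot)$ and $(2pm+1, \cdot)$ with appropriately shifted $q$'s. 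For $p=2$ one uses the classical James EHP sequence $S^n \to S^{n+1} \to S^{2n+1}$, which is what Henn uses to get $2^{q-n+1}$; the point of the present improvement is to exploit that for odd $p$ the "H" map lands in a sphere of dimension roughly $pn$ rather than $2n$, so the recursion terminates much faster and the exponent only grows like $\frac{1}{p-1}$ times the stem.

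First I would state precisely the $p$-primary EHP sequences I am using, citing James and Toda, and extract from them the numerical recursion: an inequality of the shape
\begin{equation*}
s_p(n,q) \leq s_p(n-2,\, q-2) + s_p(pn-2p+?, \, q - (\text{shift})),
\end{equation*}
(the exact indices to be pinned down from Toda's indexing), valid in the range where all terms make sense, together with base cases $s_p(n,q)=0$ for $q<n$ and the known low-stem values. Then the core of the paper is purely combinatorial: define $f(n,q)$ to be the proposed bound $2^{\frac{1}{p-1}(q-n+3-2p)}$ and verify that $f$ satisfies the same recursive inequality and dominates the base cases, so that $s_p \leq f$ follows by induction. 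The key estimate will be something like $f(n-2,q-2) + f(pn-\ldots, q-\ldots) \leq f(n,q)$, which after taking logs amounts to showing a sum of two powers of $2$ with exponents that are each at least $1$ smaller than the target exponent is at most the target power of $2$ — i.e. $2^{a} + 2^{b} \leq 2^{c}$ whenever $a \leq c-1$ and $b \leq c-1$. The arithmetic of the exponents is where the factor $\frac{1}{p-1}$ enters: travelling "up" the EHP tower to dimension $\approx pn$ costs a change in stem that, divided by $p-1$, matches the decrement in the other branch.

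I would organize the argument so that the topological input is isolated in one lemma (the recursion for $s_p$) and the rest is a self-contained inductive proof about the function $f$; this mirrors the structure of \cite{Henn} and \cite{BodigheimerHenn} but with a sharper combinatorial lemma. A subtlety to handle carefully is the exact form of the EHP sequence at odd primes — it is not a single fibre sequence but a pair of them (the "E" and the "H"/"P" portions), and Toda's sequence has a range of validity and a metastable defect, so I would need to either restrict to the stable/metastable range or absorb the low-dimensional exceptions into adjusted base cases. The same care is needed for $p=2$ and $n$ even, where $S^n$ is not an $H$-space and one must use James's $2$-local EHP sequence directly; here the recursion degrades to Henn's (the $p=2$ case of our formula is exactly $2^{q-n+1}$), so no new content is needed beyond checking our $f$ specializes correctly.

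The main obstacle I expect is the bookkeeping in the combinatorial induction: the recursion is two-term and the two branches shift $(n,q)$ very differently (one by a small amount, one by a large amount scaling with $p$), so one must choose the induction variable — presumably something like $q - n$, or a weighted combination — so that both branches strictly decrease it, and then verify the exponent inequality $2^a + 2^b \leq 2^c$ holds with the specific affine functions of $(n,q)$ that come out of Toda's indexing. Getting the constant $3-2p$ exactly right (rather than off by a bounded additive error) will require being precise about the James–Toda indexing conventions and about which terms in the long exact sequence actually contribute to the order, and is the step most likely to need genuine care rather than routine computation.
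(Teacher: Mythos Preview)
Your overall plan---isolate the EHP recursion as a topological lemma, then do combinatorics---matches the paper. But the specific combinatorial strategy you outline has a genuine gap.

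You propose to verify directly that $f(n,q)=2^{\frac{1}{p-1}(q-n+3-2p)}$ satisfies the same recursion as $s_p$, via an estimate of the form $f(n-2,q-2)+f(\text{high sphere})\leq f(n,q)$. This cannot work: the branch $(n,q)\mapsto(n-2,q-2)$ preserves the stem $q-n$, so $f(n-2,q-2)=f(n,q)$ exactly, and the inequality fails as soon as the other term is positive. Equivalently, there is no induction variable that strictly decreases on every branch of the recursion---the ``low'' branch stays on the same stem while the ``high'' branches decrease the stem but increase $n$. (Incidentally, the recursion coming from Toda's two fibrations has \emph{three} terms, not two: $s_p(n,q)\leq s_p(p(n-1)+1,q)+s_p(p(n-1)-1,q-1)+s_p(n-2,q-2)$, with a separate case contributing a ``$+1$'' when $q=p(n-1)$.)

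The paper's fix is exactly to exploit this split: induct on the stem, but within a fixed stem iterate the recursion all the way down to $n=1$ (where $s_p(1,\cdot)=0$), collecting at each step the two ``high sphere'' terms, which do land on strictly lower stems and so are controlled by the inductive hypothesis. One obtains $s_p(n,q)\leq 1+\sum(\text{many }f\text{-values})$, and the remaining work is to show these $f$-values are powers of $2$ with \emph{distinct} integer exponents all strictly below the target exponent, so that $1+\sum_{i<k}2^i=2^k$ closes the induction. Getting distinctness is why the paper actually proves the sharper bound with $\lfloor\frac{1}{p-1}(q-n+3-2p)\rfloor$ in the exponent; the floor makes the exponents integers, and a small algebraic trick merges the two families of terms into a single family with exponents differing by $1$. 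The ``$+1$'' from the exceptional case $q=p(n-1)$ is absorbed precisely because the geometric sum falls short of $2^k$ by $1$.

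One minor correction: at $p=2$ the formula gives $2^{q-n-1}$, not Henn's $2^{q-n+1}$, so the $p=2$ case is not content-free---the same argument applies using the James EHP sequence for all $n$, and improves Henn's bound by a factor of $4$.
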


As a corollary we obtain the weaker but simpler bound $s_p(n,q) \leq 2^{\frac{q-n}{p-1}}$. The $3-2p$ which appears in the original statement reflects the classical fact that the first $p$-torsion classes appear in the $(2p-3)$-rd stem. We can think of the bound as an exponential function of the stem $q-n$ in base $2^{\frac{1}{p-1}}$. The main advantage of this bound over its predecessors is that as $p$ becomes large, the base of the exponential approaches 1, so our bound grows more slowly for larger primes.

As in \cite{BodigheimerHenn}, we also obtain a bound on the rank, but we prefer to regard it as following from Theorem \ref{main}, by using that the rank of a finite $p$-torsion group is at most $\log_p$ of its cardinality.

\begin{corollary} \label{rankBound} For $n$ odd, the rank of the $p$-torsion part of $\pi_q(S^n)$ is at most $2^{\frac{q-n}{p-1}}$. If $p=2$, this bound holds also for $n$ even.
\end{corollary}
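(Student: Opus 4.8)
The plan is to deduce Corollary \ref{rankBound} directly from Theorem \ref{main}, using only elementary facts about finite abelian $p$-groups. Since $\pi_q(S^n)$ is finitely generated, its $p$-torsion part is a finite abelian $p$-group $A$; decomposing $A \cong \bigoplus_{i=1}^{k} \mathbb{Z}/p^{a_i}$ with each $a_i \geq 1$, the integer $k$ is exactly the rank of $A$ in the sense of the introduction, and $|A| = \prod_{i=1}^{k} p^{a_i} \geq p^{k}$. Taking $\log_p$ gives $k \leq \log_p|A| = s_p(n,q)$, so the rank of the $p$-torsion part of $\pi_q(S^n)$ is at most $s_p(n,q)$.

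Next I would apply Theorem \ref{main}, which gives $s_p(n,q) \leq 2^{\frac{1}{p-1}(q-n+3-2p)}$ for $n$ odd, and for all $n$ when $p = 2$. It then remains only to weaken the exponent: writing $\frac{1}{p-1}(q-n+3-2p) = \frac{q-n}{p-1} - \frac{2p-3}{p-1}$ and observing that $\frac{2p-3}{p-1} > 0$ for every prime $p$, the exponent is at most $\frac{q-n}{p-1}$. Since $t \mapsto 2^t$ is increasing, this yields a rank bound of $2^{\frac{1}{p-1}(q-n+3-2p)} \leq 2^{\frac{q-n}{p-1}}$, as claimed.

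I expect no real obstacle: all of the substantive (topological) work lives in Theorem \ref{main}, and the passage to the rank bound is pure bookkeeping. The only step needing a second's thought is checking $\frac{2p-3}{p-1} > 0$ in the edge case $p = 2$, where it equals $1$; this also makes transparent the earlier remark that Corollary \ref{rankBound} is strictly weaker than what Theorem \ref{main} actually provides.
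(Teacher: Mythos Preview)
Your proposal is correct and is essentially the same argument the paper gives: it notes that Corollary~\ref{rankBound} follows from Theorem~\ref{main} via the fact that the rank of a finite $p$-torsion group is at most $\log_p$ of its cardinality, together with the observation (stated just after Theorem~\ref{main}) that $s_p(n,q) \leq 2^{\frac{q-n}{p-1}}$ is a weakening of the bound in Theorem~\ref{main}. Your explicit verification that $\frac{2p-3}{p-1}>0$ for all primes $p$ just spells out what the paper leaves implicit.
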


The bound proven by Henn in \cite{Henn} was a lemma used to establish results about the rank of the $p$-torsion part of $\pi_q(X)$ for $X$ any simply connected space of finite type. Our improvement to the bound feeds directly into the main theorem of that paper to give that a certain constant $c_p$ which appears there may be assumed to be at least $(\frac{1}{2})^{\frac{1}{p-1}}$ (Henn shows that it is at least $\frac{1}{2}$). This has the following corollary.

\begin{corollary} Let $X$ be a simply connected space of finite type, and suppose that the radius of convergence of the power series $\sum_{q=1}^\infty \dim_{\Zp}(H_q(\Omega X; \Zp))\cdot x^q$ is 1. Then $\sum_{q=1}^\infty \dim_{\Zp}(\pi_q(X; \Zp) \otimes \Zp )\cdot x^q$ has radius of convergence at least $(\frac{1}{2})^{\frac{1}{p-1}}$. In particular, the rank of the $p$-torsion part of $\pi_q(X)$ is at most $2^{\frac{q}{p-1}}$ for all but perhaps finitely many $q$.
\end{corollary}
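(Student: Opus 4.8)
The plan is to obtain this as a direct consequence of the main theorem of \cite{Henn}, using our Theorem~\ref{main} in place of the sphere bound $2^{q-n+1}$ that Henn uses there. Recall that Henn's theorem produces, for every simply connected space $X$ of finite type whose loop-space homology Poincar\'e series $\sum_q \dim_{\Zp} H_q(\Omega X;\Zp)\,x^q$ has radius of convergence $1$, a lower bound $c_p$ for the radius of convergence of $\sum_q \dim_{\Zp}(\pi_q(X;\Zp)\otimes\Zp)\,x^q$, where $c_p$ depends only on $p$. The first thing I would do is go through Henn's proof and isolate the single place where a quantitative estimate for $\pi_\ast(S^n)$ enters: his argument uses only a bound of the shape $s_p(n,q)\le g(q-n)$ for a fixed function $g$, and the constant $c_p$ that comes out is (the reciprocal of) the exponential growth rate of $g$. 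Taking $g(k)=2^{k+1}$ reproduces Henn's value $c_p=\tfrac12$.

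The second step should be purely formal: Theorem~\ref{main} lets us take instead $g(k)=2^{\frac{1}{p-1}(k+3-2p)}$, whose exponential growth rate is $2^{1/(p-1)}$ rather than $2$. Every other ingredient of Henn's argument --- the Hilton--Milnor and James (EHP, \cite{James}) decompositions, the homological comparison between $X$ and the relevant wedges and products of spheres, and the inductive propagation of radii of convergence --- is untouched, since it uses the homotopy of spheres only through the growth rate of $g$; in particular the additive constant $3-2p$, which merely records the stem $2p-3$ in which the first $p$-torsion appears, is irrelevant. Rerunning the argument therefore replaces $c_p=\tfrac12$ by $c_p\ge 2^{-1/(p-1)}=(\tfrac12)^{1/(p-1)}$, which is the asserted bound on the radius of convergence; as a sanity check, for $p=2$ this recovers $c_2=\tfrac12$. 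For the final ``in particular'', one uses that $\dim_{\Zp}(\pi_q(X;\Zp)\otimes\Zp)$ dominates the rank of the $p$-torsion part of $\pi_q(X)$ (via the universal-coefficient sequence for mod-$p$ homotopy) together with the relation between the radius of convergence of a power series with non-negative coefficients and the growth of those coefficients; as in \cite{Henn}, reading off the explicit recursive estimate with the above $g$ yields rank at most $2^{q/(p-1)}$ for all but finitely many $q$.

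The main obstacle is entirely in the first step: one must read \cite{Henn} carefully enough to be certain that the homotopy-of-spheres estimate is genuinely the \emph{only} $p$-dependent numerical input, and that it feeds in only through the exponential growth rate of $g$, so that an additive shift of the argument does not change $c_p$. Granting that --- which the discussion preceding this corollary already asserts --- the remaining steps are bookkeeping, and the improvement $c_p\ge(\tfrac12)^{1/(p-1)}$, hence the corollary, follows.
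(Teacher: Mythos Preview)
Your proposal is correct and is exactly the approach the paper takes: the paper does not give a separate proof of this corollary, but simply remarks (immediately before the statement) that feeding Theorem~\ref{main} into the main theorem of \cite{Henn} improves Henn's constant $c_p$ from $\tfrac{1}{2}$ to $(\tfrac{1}{2})^{1/(p-1)}$, from which the corollary follows. Your write-up spells out a bit more of the mechanics (how the growth rate of $g$ determines $c_p$, and why the additive shift $3-2p$ is irrelevant), but the substance is identical.
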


The hypotheses of the above corollary are satisfied if, for example, the dimension of $H_q(\Omega X ; \Zp)$ is bounded above by a polynomial in $q$.

I would like to thank my supervisor, Stephen Theriault, for all of his help and support. I would also like to thank Hans-Werner Henn for his helpful correspondence - in particular for drawing my attention to the methods of his paper \cite{Henn}, and for making me aware of the paper \cite{FlajoletProdinger} of Flajolet and Prodinger, which lead indirectly to the idea for this paper.

\section{Approach} \label{approach}

It will be convenient for us to think of a stem in the homotopy groups of spheres as a combinatorial object, so we say that the $k$-th \emph{stem} is the set $\{(n,q) \in \mathbb{N} \times \mathbb{N} \ \lvert \ q-n=k \}$. The \emph{negative} stems, for example, are those for which $k<0$.

Fix a prime $p$, and assume henceforth that all spaces are localized at $p$. For a space $X$, let $J_k(X)$ denote the $k$-th stage of the James Construction on $X$. For $n$ odd, we have the following ($p$-local) fibrations, from \cite{Toda} and \cite{James}: $$J_{p-1}(S^{n-1}) \longrightarrow \Omega S^{n} \longrightarrow \Omega S^{p(n-1)+1}, \textrm{ and}$$ $$S^{n-2} \longrightarrow \Omega J_{p-1}(S^{n-1}) \longrightarrow \Omega S^{p(n-1)-1}.$$

The long exact sequences on homotopy groups induced  by these fibrations are called \emph{EHP-sequences}. The following inequalities are proven in the first lemma of \cite{BodigheimerHenn}. The first inequality is obtained by considering the EHP-sequences, assuming that all groups are finite, and the second inequality accounts for the possibility that one of the groups is not finite, using knowledge of the relative homotopy groups $\pi_q(\Omega^2 S^{n}, S^{n-2})$, as in for example Appendix 2 of \cite{Husemoller}. When $p=2$ the situation is simpler: the above fibrations are just the odd and even cases of $$S^{n-1} \longrightarrow \Omega S^{n} \longrightarrow \Omega S^{2n-1}.$$

\begin{lemma} \label{inequalities} For all $q$, and $n$ odd:

\begin{enumerate}
    \item $s_p(n,q) \leq s_p(p(n-1)+1,q)+s_p(p(n-1)-1,q-1)+s_p(n-2,q-2)$ if $q \neq p(n-1)$.
    \item $s_p(n,q) \leq 1+s_p(n-2,q-2)$ if $q = p(n-1)$.
\end{enumerate}

When $p=2$, we no longer need to restrict to odd $n$, and the first inequality can be replaced by $$s_2(n,q) \leq s_2(2n-1,q)+s_2(n-1,q-1).$$
\end{lemma}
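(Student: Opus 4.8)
The plan is to extract both inequalities from the long exact sequences (the EHP sequences) of the two displayed fibrations, using throughout the following elementary fact about finitely generated abelian groups: \emph{if $A\to B\to C$ is exact and $A$ is finite, then $\log_p|B_{(p)}|\le\log_p|A_{(p)}|+\log_p|C_{(p)}|$}, where $G_{(p)}$ denotes the $p$-primary torsion subgroup of $G$. (Writing $I$ for the image of $A$ in $B$, the group $I$ is a finite quotient of $A$, so $|I_{(p)}|\le|A_{(p)}|$; also $B/I$ embeds in $C$; and for a short exact sequence $0\to I\to B\to B/I\to 0$ one has $|B_{(p)}|\le|I_{(p)}|\cdot|(B/I)_{(p)}|$.) Since $\pi_k(\Omega X)=\pi_{k+1}(X)$, applying this to the exact sequence of the second fibration $S^{n-2}\to\Omega J_{p-1}(S^{n-1})\to\Omega S^{p(n-1)-1}$ in homotopy degree $q-2$ bounds $\log_p|\pi_{q-1}(J_{p-1}(S^{n-1}))_{(p)}|$ by $s_p(n-2,q-2)+s_p(p(n-1)-1,q-1)$, and then applying it to the exact sequence of the first fibration $J_{p-1}(S^{n-1})\to\Omega S^n\to\Omega S^{p(n-1)+1}$ in homotopy degree $q-1$ bounds $s_p(n,q)$ by that quantity plus $s_p(p(n-1)+1,q)$; composing gives inequality (1). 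For $p=2$ the James fibration $S^{n-1}\to\Omega S^n\to\Omega S^{2n-1}$ exists for every $n$, and the same argument applied to it in degree $q-1$ gives $s_2(n,q)\le s_2(n-1,q-1)+s_2(2n-1,q)$ with no parity hypothesis.

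For this to go through one only needs the \emph{first} group in each of the two exact sequences used to be finite. By Serre's finiteness theorem \cite{Serre2}, $\pi_k(S^m)$ is infinite only when $k=m$ (or $m$ even and $k=2m-1$); and since $J_{p-1}(S^{n-1})$ has rational cohomology ring $\mathbb{Q}[x]/(x^p)$ with $|x|=n-1$ even, its minimal Sullivan model shows $\pi_*(J_{p-1}(S^{n-1}))$ is infinite only in degrees $n-1$ and $p(n-1)-1$. So the finiteness requirement can fail only when $q=n$ — where inequality (1) is trivially true because $s_p(n,n)=0$ — or when $q=p(n-1)$, precisely the degree excluded in (1); for $p=2$ and $n$ odd the extra degree $q=2n-2$ is again $q=p(n-1)$, handled by (2).

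The real content is inequality (2), where $q=p(n-1)$. Here the first fibration is useless, since $\pi_{q-1}(J_{p-1}(S^{n-1}))$ is now infinite, so instead one works directly with the double suspension $E^2\colon S^{n-2}\to\Omega^2 S^n$, using $\pi_q(S^n)=\pi_{q-2}(\Omega^2 S^n)$. The long exact sequence of the pair $(\Omega^2 S^n,S^{n-2})$ contains $\pi_{q-2}(S^{n-2})\xrightarrow{E^2}\pi_{q-2}(\Omega^2 S^n)\to\pi_{q-2}(\Omega^2 S^n,S^{n-2})$, and $\pi_{q-2}(S^{n-2})$ is finite since $q\ne n$, so the fact quoted above gives
\[
s_p(n,q)\ \le\ s_p(n-2,q-2)+\log_p\bigl|\pi_{q-2}(\Omega^2 S^n,S^{n-2})_{(p)}\bigr|.
\]
It remains to show that $\pi_{p(n-1)-2}(\Omega^2 S^n,S^{n-2})$ has $p$-primary part of order at most $p$, and this is where the classical analysis of the relative homotopy groups of the double suspension is needed (see Appendix 2 of \cite{Husemoller}): $p$-locally $E^2$ is an equivalence through degree $p(n-1)-3$, and the first relative homotopy group, occurring in degree $p(n-1)-2$, has $p$-primary part $\mathbb{Z}/p$ — this is exactly the arithmetic that makes the $(2p-3)$-stem the first one carrying $p$-torsion. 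I expect this identification of the first relative group to be the one genuinely topological step; everything else is bookkeeping with exact sequences. Granting it, $\log_p|\pi_{q-2}(\Omega^2 S^n,S^{n-2})_{(p)}|\le 1$ and inequality (2) follows.
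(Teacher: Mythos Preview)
Your proposal is correct and follows precisely the approach the paper itself indicates: the paper does not give a self-contained proof of this lemma but attributes it to the first lemma of \cite{BodigheimerHenn}, remarking that inequality (1) comes from chaining the two EHP long exact sequences under the assumption that the relevant groups are finite, and that inequality (2) handles the one degree where finiteness fails via the relative groups $\pi_*(\Omega^2 S^n,S^{n-2})$ as in Appendix~2 of \cite{Husemoller}. Your write-up is a faithful (and more detailed) execution of exactly that sketch, including the correct identification of the degrees where finiteness can fail and the correct appeal to the double suspension for~(2).
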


These inequalities will be used to prove Theorem \ref{main}. It is worth noting that the extent to which the inequalities fail to be equalities is measured by the size of the images of the boundary maps (equivalently, the kernels of the suspensions) in the EHP sequences. In some sense, therefore, the extent to which our bound fails to be sharp is measuring the aggregate size of the images of EHP boundary maps.

\section{Limitations of our approach}

If in Lemma \ref{inequalities}, one replaces the inequalities with equalities, and regards this as an inductive definition of integers $t_p(n,q)$, then necessarily $t_p(n,q)$ is the best upper bound that can be obtained for $s_p(n,q)$ using that lemma. In \cite{Henn}, Henn defines inductively integers $b_2(n,k)$. He shows that $t_2(n,q)=b_2(q-2,n)$ (note that $n$ has switched roles). In \cite{FlajoletProdinger}, Flajolet and Prodinger study a combinatorially defined sequence $H_n$. By definition, $t_2(2,q)=b_2(q-2,2)=H_{q-2}$, a fact I was made aware of by Henn. Flajolet and Prodinger obtain an asymptotic estimate $H_q \sim K \cdot \nu^q$, giving formulas for $K$ and $\nu$ and computing both to 15 decimal places. To 3 decimal places, $K$ is 0.255, and $\nu$ is 1.794. They remark that $H_q$ (which is equal to $t_2(2,q+2)$) is at least $F_q$, where $F_q$ is the $q$-th Fibonacci number. We will not do so here, but one can show by induction that $t_p(3,2p+j(4p+5)) \geq F_{2j+1}$ for $j \geq 0$. Inductively applying $t_p(n-2,q-2) \leq t_p(n,q)$ then gives that $t_p(n,2p+j(4p+5)+n-3) \geq F_{2j+1}$ for all odd $n \geq 3$. We therefore have the following.

\begin{corollary} Any bound on the base-$p$ logarithm of the cardinality of the $p$-primary part of $\pi_{2p+j(4p+5)+n-3}(S^n)$ (or the rank of that group) which can be obtained from Lemma \ref{inequalities} is greater than or equal to $F_{2j+1}$. In particular, any base for an exponential bound which can be obtained from Lemma \ref{inequalities} is at least $\phi^{\frac{2}{4p+5}}$, where $\phi$ denotes the golden ratio. 
\end{corollary}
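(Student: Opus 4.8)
The plan is to establish the two lower bounds stated in the corollary by induction, following the scheme sketched in the preceding paragraph of the text, and then to extract the asymptotic consequence about the base of any exponential bound. Define $t_p(n,q)$ by turning the inequalities of Lemma \ref{inequalities} into equalities, used as an inductive definition (with the convention that $t_p(n,q)=0$ in the negative stems and wherever $\pi_q(S^n)$ is known to vanish $p$-locally). By construction $t_p(n,q)$ is the smallest function satisfying the recursion forced by Lemma \ref{inequalities}, so \emph{any} bound obtainable from that lemma dominates $t_p(n,q)$; hence it suffices to prove $t_p(n,q) \geq F_{2j+1}$ at the relevant spots.

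First I would prove the base case of the induction on $j$: that $t_p(3, 2p + j(4p+5)) \geq F_{2j+1}$ for all $j \geq 0$. For $j=0$ one has $F_1 = 1$ and $t_p(3,2p)$ should be at least $1$, coming from the first $p$-torsion class in the $(2p-3)$-stem (concretely, apply clause (2) of Lemma \ref{inequalities}, or just the known non-vanishing). For the inductive step one unwinds the equality version of clause (1):
\[
t_p(3,q) = t_p(2p-1,q) + t_p(2p-3,q-1) + t_p(1,q-2),
\]
and, more usefully, one iterates the relation $t_p(n-2,q-2) \leq t_p(n,q)$ together with the contribution of the ``$S^{p(n-1)+1}$'' and ``$S^{p(n-1)-1}$'' terms, which themselves feed back (after enough suspensions) into copies of $t_p(3,\,\cdot\,)$ in lower stems. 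The arithmetic choice of the step $4p+5$ in the stem is exactly what makes two such previously-established values, at parameters $2p+(j-1)(4p+5)$ and $2p+(j-2)(4p+5)$ (suitably reindexed), line up to be added, yielding $F_{2j-1}+F_{2j-3} \leq F_{2j+1}$... more precisely $F_{2(j-1)+1} + F_{2(j-2)+1} \le F_{2j+1}$, which holds since $F_{2j+1} = F_{2j} + F_{2j-1} \ge F_{2j-1} + F_{2j-1} \ge F_{2j-1}+F_{2j-3}$. I expect the bookkeeping to locate these two summands inside the recursion to be the main obstacle: one must check that the indices produced by composing the EHP-splitting with repeated desuspension really do hit $2p + (j-1)(4p+5)$ and $2p+(j-2)(4p+5)$ in the third stem, and that no negative-stem or otherwise-zero term is accidentally being used in place of a genuinely large one.

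Having the base spaces $S^3$ handled, the passage to general odd $n \geq 3$ is immediate: iterating $t_p(n-2,q-2) \leq t_p(n,q)$ exactly $\tfrac{n-3}{2}$ times gives $t_p(n, 2p + j(4p+5) + n - 3) \geq t_p(3, 2p+j(4p+5)) \geq F_{2j+1}$. This proves the first assertion of the corollary. For the second assertion, suppose a bound of the form $s_p(n,q) \leq C \cdot \beta^{\,q-n}$ is derivable from Lemma \ref{inequalities}; then $C \beta^{\,2p + j(4p+5) - 3} \geq F_{2j+1}$ for all $j$. Using Binet's formula $F_{2j+1} \sim \phi^{2j+1}/\sqrt5$, taking $(4p+5)j$-th roots and letting $j \to \infty$ forces $\beta^{\,4p+5} \geq \phi^{2}$, i.e. $\beta \geq \phi^{\frac{2}{4p+5}}$, as claimed. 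The only subtlety here is making ``obtainable from Lemma \ref{inequalities}'' precise, but this is exactly the minimality of $t_p$ noted at the start, so no extra work is needed.
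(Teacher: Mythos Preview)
The paper itself does not supply a proof of this corollary: the surrounding text merely asserts that ``one can show by induction that $t_p(3,2p+j(4p+5)) \geq F_{2j+1}$'' and then deduces the general-$n$ statement by iterating $t_p(n-2,q-2)\le t_p(n,q)$. Your reduction from general odd $n$ to $n=3$ and your asymptotic extraction of $\beta\ge\phi^{2/(4p+5)}$ match exactly what the paper has in mind and are fine.

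The genuine gap is in your inductive step for $n=3$. You propose to locate two summands inside the unwound recursion, bounded below by $F_{2(j-1)+1}=F_{2j-1}$ and $F_{2(j-2)+1}=F_{2j-3}$, and then conclude. But you yourself verify that $F_{2j-1}+F_{2j-3}\le F_{2j+1}$, and this inequality points the \emph{wrong way}: from $t_p(3,q_j)\ge F_{2j-1}+F_{2j-3}$ one cannot deduce $t_p(3,q_j)\ge F_{2j+1}$, since the odd-indexed Fibonacci numbers satisfy $F_{2j+1}=3F_{2j-1}-F_{2j-3}>F_{2j-1}+F_{2j-3}$ for $j\ge2$. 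So the two-summand scheme you outline is not enough to close the induction; either more summands must be extracted from the repeated application of clause~(1) across the step of length $4p+5$, or a stronger inductive hypothesis (e.g.\ tracking two consecutive values simultaneously so as to recover the genuine Fibonacci recurrence $F_{k+1}=F_k+F_{k-1}$ rather than the insufficient $F_{2j-1}+F_{2j-3}$) must be carried. You flag the bookkeeping as ``the main obstacle,'' but the obstacle is more than bookkeeping: the target inequality is simply not reachable from the two terms you name.
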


\section{Proof of Theorem \ref{main}}

\begin{proof}[Proof of Theorem \ref{main}] We will actually prove the slightly stronger result that $s_p(n,q) \leq 2^{\lfloor \frac{1}{p-1}(q-n+3-2p) \rfloor}$. The floor function forces the exponent to be an integer, which will be useful in the proof. We will use a (slightly modified) strong double induction over stems. More precisely, the proof of the result for $(n,q)$ will use the result for all $(m,r)$ with $r-m<q-n$ (that is, on lower stems) and for $(1,q-n+1)$ (that is, the entry at the base of the stem on which $(n,q)$ lies). A proof using the other lower entries on the same stem in the induction is possible, but results in a more unwieldy inductive hypothesis. The case $p=2$, $n$ even will be treated at the end.

Suppose first that $q \leq n$. In this case, $\pi_q(S^n)$ is torsion free (indeed, it is zero for $q<n$) so $s_p(n,q) = 0$. This proves the result for all non-positive stems. The higher homotopy groups of $S^1$ are trivial (since it has a contractible universal cover) so $s_p(1,q)=0$ for all $q$. This proves the base case of each stem.

It remains only to treat an inductive step on a positive stem. Thus, let $(n,q) \in \mathbb{N} \times \mathbb{N}$ with $n$ odd, and suppose that the result is proven for all $(m,r)$ with $r-m<q-n$. Consider the two inequalities of Lemma \ref{inequalities}. We wish to apply the first inequality inductively down the stem to bound $s_p(n,q)$ by a sum of terms on lower stems and $s_p(1,q-n+1)$, which is zero by the discussion above. The only complicating factor is the second inequality, which may require us to add one to our bound at certain steps. However, the second case of Lemma \ref{inequalities} can occur at most once per stem, so at worst we will have to add one to the bound that we would obtain if the first case of the Lemma held everywhere. More precisely, we obtain $$s_p(n,q) \leq 1 + \sum_{i=0}^{\frac{1}{2}(n-3)} (s_p(p(n-2i-1)+1,q-2i)+s_p(p(n-2i-1)-1,q-2i-1)).$$

Each $s_p(m,r)$ is an integer. Therefore, for those $(m,r)$ for which we inductively have $s_p(m,r) \leq 2^{\lfloor \frac{1}{p-1}(r-m+3-2p) \rfloor}$, we actually have the sightly stronger statement that $s_p(m,r) \leq \lfloor 2^{\lfloor \frac{1}{p-1}(r-m+3-2p) \rfloor} \rfloor$. Including this fact into the above inequality, we find that 

\begin{equation}\label{star} \medmath{s_p(n,q) \leq 1 + \sum_{i=0}^{\frac{1}{2}(n-3)} (\lfloor 2^{\lfloor \frac{1}{p-1}(q-2i-(p(n-2i-1)+1)+3-2p) \rfloor} \rfloor+\lfloor 2^{\lfloor \frac{1}{p-1}(q-2i-1-(p(n-2i-1)-1)+3-2p) \rfloor} \rfloor). \tag{$\ast$}} \end{equation}

Notice that the value of the floor function on an integer power of 2 is given by $$\lfloor 2^i \rfloor = \begin{cases} 2^i & i \geq 0 \\
0 & i < 0.
\end{cases}$$

We now bound this summation by another where the nonzero exponents are distinct integers. More precisely, adding $1-\frac{1}{p-1}$ to the exponent of the second term in \eqref{star} (inside the floor function) gives that

\begin{equation*}\medmath{
\begin{split}
s_p(n,q) & \leq 1 + \sum_{i=0}^{\frac{1}{2}(n-3)} (\lfloor 2^{\lfloor \frac{1}{p-1}(q-2i-(p(n-2i-1)+1)+3-2p) \rfloor} \rfloor+\lfloor 2^{\lfloor \frac{1}{p-1}(q-(2i+1)-(p(n-(2i+1)-1)+1)+3-2p) \rfloor} \rfloor) \\
& = 1 + \sum_{i=0}^{n-2} (\lfloor 2^{\lfloor \frac{1}{p-1}(q-i-(p(n-i-1)+1)+3-2p) \rfloor} \rfloor. \end{split}}
\end{equation*}

In particular, $s_p(n,q)$ is at most one greater than a sum of powers of 2. It suffices to show that those powers of 2 that are not killed off by the outer floor function are all distinct and strictly smaller than $2^{\lfloor \frac{1}{p-1}(q-n+3-2p) \rfloor}$, because $\sum_{i=0}^{k-1}2^i = 2^{k}-1$. 

To see that they are distinct, notice that changing $i$ by 1 changes the exponent by 1. To see that they are strictly smaller than $2^{\lfloor \frac{1}{p-1}(q-n+3-2p) \rfloor}$, consider the largest power occurring in the summation, which is the $i=n-2$ term. Its exponent rearranges to $\lfloor \frac{1}{p-1}(q-n+3-2p) - 1 \rfloor = \lfloor \frac{1}{p-1}(q-n+3-2p) \rfloor - 1$, as required. This completes the proof for $n$ odd.

It remains to treat the case $p=2$, $n$ even. Since the simplification at $p=2$ in Lemma \ref{inequalities} holds for all $n$, the above proof may be repeated without restricting to $n$ odd, and doing so gives the result for all $n$.
\end{proof}

\bibliographystyle{amsalpha}

\end{document}